\newtheorem{theorem}{Theorem}
\newtheorem{lemma}{Lemma}
\theoremstyle{definition}
\theoremstyle{remark}
\numberwithin{equation}{section}
\begin{document}

\title[Tietze Extension in Constructive Mathematics]{Tietze Extension Does Not Always Work In Constructive Mathematics If Closed Sets Are Defined As Sequentially Closed Sets}


\author{Shun Ding}
\address{}
\curraddr{}
\email{}
\thanks{}

\author{Yang Wan}
\address{}
\curraddr{}
\email{}
\thanks{}

\author{Luofei Wang}

\author{Siqi Xiao}
\subjclass[2020]{Primary 03F60; Secondary
54F65}

\date{\today}

\dedicatory{}
\keywords{Constructive and recursive analysis, Topological characterizations of particular spaces}
\begin{abstract}
We prove that Tietze Extension does not always exist in constructive mathematics if closed sets on which the function we are extending are defined as sequentially closed sets. Firstly, we take a discrete metric space as our topological space. Now all sets are open and sequentially closed. Then, we form an unextendible algorithmic function transforming positive integers to 0 and 1, looking at the preimages of these values as our sequentially closed sets. Then we show that if the Tietze theorem conclusion holds for these closed sets then the unextendible function is extendible, thus giving us a contradiction.
\end{abstract}

\maketitle


\section*{Introduction}
Constructive mathematics differs from classical mathematics by interpreting "there exists" as "we can construct." All logical connectives and quantifiers are understood as constructive procedures. Notably, the Russian school allows the Markov Principle (if one can refute that a set is empty, one can find an element), while the American school does not \cite{bishop2012constructive,kushner1984lectures,markov1962constructive}.
Constructive real numbers (CRNs) were defined by Alan Turing (1936), who defined a real as constructive if there exists a \emph{computable} function
\[
f \colon \mathbb{N} \to \mathbb{Q}
\]
such that \cite{turing1936computable,turing1938computable}
\[
\forall k \in \mathbb{N}\; \bigl|x - f(k)\bigr| < 2^{-k}.
\]
This operational definition provided by him rejects the classical continuum and establishes \emph{computability} as the bedrock of existence \cite{turing1936computable,turing1938computable,bishop2012constructive}. Constructive functions mean that a function on constructive numbers
\[
f \colon \mathbb{R}_{\mathrm{c}} \to \mathbb{R}_{\mathrm{c}}
\]
can map CRNs to CRNs and it is algorithmic. Of course, equivalent CNRs should be mapped to equivalent CRNs.

Markov–Tseitin theorem says that all constructive functions are continuous, and Zaslavskii says that the closed bounded interval is not compact in the sense of open covers definition but is compact in terms of existence of finite $\varepsilon$‑net for each $\varepsilon$ \cite{kushner1984lectures}.

In classical mathematics, the \emph{Tietze Extension Theorem} is a fundamental result in topology, asserting that any real‑valued continuous function defined on a closed subset of a normal topological space can be extended to a continuous function on the whole space \cite{munkres2017topology,tietze1915funktionen}. Its proof relies non‑constructively on the Axiom of Choice and Law of Excluded Middle \cite{munkres2017topology}.

However, constructive mathematics adopts a stricter view on existence proofs: mathematical objects must be explicitly constructed by an algorithm, and proofs must avoid non‑constructive principles. This paradigm shift, led by Brouwer's intuitionism \cite{brouwer1907over}, Bishop's constructive analysis \cite{bishop2012constructive}, and also developed by Markov \cite{markov1962constructive} and Shanin \cite{shanin1962constructive}, reveals that many classical theorems fail under constructive scrutiny. In particular, the validity of extension theorems like Tietze's becomes highly sensitive to the precise definitions of topological concepts. A critical point of divergence arises in the definition of closed sets. While classical topology typically defines closed sets as complements of open sets, constructive approaches often employ alternative characterizations, such as \emph{sequentially closed sets}, to better align with computability and explicit definability. These two definitions of closed sets are equivalent in point‑set topology but not in constructive topology \cite{bishop2012constructive,kushner1984lectures}.

This paper investigates the status of the Tietze Extension Theorem in a constructive setting where closed sets are defined as sequentially closed. We demonstrate that, contrary to the classical case, the Tietze theorem does not always hold constructively under this definition. Our approach centers on a specific counterexample: we construct a metric space $X$ and a sequentially closed subset $A\subset X$ with a continuous function $f\colon A \to \mathbb{R}$ that admits no continuous extension to $X$.The counter example uses natural features of constructive logic, including the undecidability of disjunctions and the inability to uniformly decide convergence properties, which obstruct the extension process.

This result emphasizes a deeper tension between classical and constructive topology: definitions equivalent in classical topology may bifurcate in constructive settings, leading to divergent theorem validity. It also highlights the necessity of carefully reevaluating foundational tools when transitioning to constructive frameworks. Our work contributes to the broader program of constructive analysis by clarifying the limitations of extension theorems and emphasizing the role of definability in continuity principles.

\section*{Method}

\subsection*{Definition} A \emph{normal topological space} is a space in which any two disjoint closed sets have disjoint open neighbourhoods and, moreover, every singleton set is closed.

\bigskip
\noindent\textbf{A particular case of the Tietze Extension Theorem (two--closed--sets version).}  \\
Let $X$ be a normal topological space and let $A,B\subseteq X$ be disjoint closed subsets.  Any continuous function
\[
f\colon A\cup B \rightarrow \mathbb{R},
\qquad f|_{A}=0,\; f|_{B}=1,
\]
can be extended to a continuous function 
\[
F\colon X \rightarrow \mathbb{R}
\]
such that $F|_{A}=f|_{A}$ and $F|_{B}=f|_{B}$ \cite{munkres2017topology}.

\medskip
\noindent In particular, if $f$ maps $A$ to a constant $a\in\mathbb{R}$ and $B$ to a constant $b\in\mathbb{R}$, then there exists a continuous function $F\colon X\to\mathbb{R}$ with
\[
F(x)=a \quad\text{for all }x\in A, 
\qquad 
F(x)=b \quad\text{for all }x\in B .
\]

\bigskip
\noindent This last statement is the well‑known \emph{Urysohn Lemma}, which is needed to prove the general Tietze Extension Theorem in point‑set topology.

\bigskip
\subsection*{A discrete counterexample setup} 
Define a topological space $X=(\Omega,\tau)$ with $\Omega=\mathbb{N}$ and the discrete metric
\[
d(x,y)=
\begin{cases}
1,& x\neq y,\\
0,& x=y .
\end{cases}
\]
In this space $X$, every subset is both closed and open.
\begin{lemma}
Every subset of \(X\) is sequentially closed.
\end{lemma}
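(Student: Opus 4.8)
The plan is to reduce the claim to a single observation about convergence in the discrete metric: in $X$, a sequence $(x_n)$ can only converge to a point that it eventually equals. First I would recall the definition to be used, namely that a subset $A \subseteq X$ is \emph{sequentially closed} when, for every sequence $(x_n)$ lying in $A$ that converges to some $x \in X$, the limit $x$ already belongs to $A$. Thus it suffices to fix an arbitrary subset $A$, take a sequence $(x_n)$ in $A$ with $x_n \to x$, and show $x \in A$.

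The key step is to analyze what convergence forces in this metric. Applying the definition of convergence with the error bound $\varepsilon = 1/2$, there exists an index $N$ such that $d(x_n, x) < 1/2$ for all $n \geq N$. Because the discrete metric takes only the values $0$ and $1$, the inequality $d(x_n, x) < 1/2$ leaves no option but $d(x_n, x) = 0$, that is $x_n = x$, for every $n \geq N$; in particular $x = x_N$. Finally I would conclude that, since the sequence lies in $A$, the term $x_N$ belongs to $A$, and as $x = x_N$ we obtain $x \in A$. As $A$ was arbitrary, every subset of $X$ is sequentially closed.

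I expect the only delicate point to be constructive soundness rather than mathematical difficulty, since classically the argument is immediate. The proof must avoid the law of excluded middle, and here it does: the index $N$ is produced directly by the definition of convergence, which is a genuine construction, and the passage from $d(x_n,x) < 1/2$ to $x_n = x$ uses only the decidability of equality on $\mathbb{N}$, on which the discrete metric is built. Hence no non-constructive principle is invoked, and the lemma holds in the intended constructive setting.
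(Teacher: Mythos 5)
Your proof is correct and follows essentially the same route as the paper's: both arguments rest on the observation that a convergent sequence in the discrete metric must eventually be constantly equal to its limit, which therefore lies in the set. Your version is in fact slightly cleaner, since it makes the stabilisation index $N$ explicit via $\varepsilon = 1/2$ and avoids the paper's unnecessary case split on whether $E$ is empty --- a split that is itself a mildly non-constructive move the argument does not need.
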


\begin{proof}
Take any set \(E \subseteq X\). If \(E = \varnothing\), then there is no sequence in the set, thus it is sequentially closed.
If \(E \neq \varnothing\), let \(\{x_n\}\) be a convergent sequence in \(E\) that converges to \(x\).
Since the metric is discrete, the sequence must stabilise after a certain moment with all its points identical.
Hence \(E\) contains the limit of the sequence. 
\end{proof}

\bigskip

\begin{theorem}
There exists a constructive function that cannot be extended to the whole space.
\end{theorem}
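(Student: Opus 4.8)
The plan is to exploit the fact that in the discrete space $X$ continuity is automatic and, by the preceding Lemma, every subset is sequentially closed; hence the hypotheses of the two--closed--sets version of Tietze are satisfied for \emph{any} disjoint pair of subsets. The entire burden of the argument is therefore shifted off topology and onto the \emph{constructivity} of the extension, and I would draw the contradiction from recursion theory.

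First I would fix a \emph{recursively inseparable} pair of disjoint recursively enumerable sets, for instance $A=\{x\in\mathbb{N}:\varphi_x(x)\downarrow = 0\}$ and $B=\{x\in\mathbb{N}:\varphi_x(x)\downarrow = 1\}$, where $\varphi_x$ is the $x$-th partial recursive function; it is standard that no recursive set $C$ satisfies $A\subseteq C$ and $B\cap C=\varnothing$. These sets are disjoint and, by the Lemma, sequentially closed, so they serve as the two closed sets. On the domain $A\cup B$ I would define $f$ by $f|_A=0$ and $f|_B=1$. Because $A$ and $B$ are recursively enumerable and disjoint, a dovetailed search over both enumerations computes $f(x)$ for every $x\in A\cup B$, so $f$ is algorithmic; and since the subspace $A\cup B$ is again discrete, $f$ is continuous. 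This is the unextendible algorithmic function, defined on a proper subset of the whole space $\mathbb{N}$.

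Next I would argue by contradiction: suppose the constructive Tietze conclusion holds, yielding a constructive function $F\colon X\to\mathbb{R}$ with $F|_A=0$ and $F|_B=1$. Since $\mathbb{N}$ embeds in the constructive reals, $F$ furnishes an algorithm that, on input $x$, outputs a code for the constructive real $F(x)$ uniformly in $x$. From this I would build a separating set $C$ as follows: on input $x$, compute a rational $q$ with $|F(x)-q|<1/4$ and put $x\in C$ exactly when $q<1/2$. Membership in $C$ is decided by a terminating rational comparison, so $C$ is recursive. If $x\in A$ then $F(x)=0$ forces $q<1/4$, hence $x\in C$; if $x\in B$ then $F(x)=1$ forces $q>3/4$, hence $x\notin C$. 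Thus $A\subseteq C$ and $B\cap C=\varnothing$, contradicting recursive inseparability, and this contradiction shows that no such constructive $F$ can exist.

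The step I expect to be the main obstacle is this last conversion of a constructive \emph{real}-valued extension into a \emph{decidable} separating predicate. Constructively one cannot decide $F(x)<1/2$ versus $F(x)\ge 1/2$ in general, since exact comparison of constructive reals is undecidable; the argument survives only because $F$ is pinned to the exact values $0$ and $1$ on $A$ and $B$, so that a single $1/4$-approximation is guaranteed to fall strictly on the correct side of $1/2$. Making precise that $F$ being a constructive function yields a uniform algorithm for these approximations, and hence a genuinely total recursive $C$, is the delicate point, and it is exactly here that the undecidability of disjunctions obstructs any constructive proof of Tietze in this setting.
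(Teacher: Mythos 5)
Your proposal is correct and follows essentially the same route as the paper: the paper also takes a computable $\{0,1\}$-valued function with no total computable extension (citing Vereshchagin--Shen rather than constructing the recursively inseparable pair explicitly), lets $A$ and $B$ be its preimages, and derives the contradiction by rounding a rational approximation of the hypothetical extension $F(x)$ at threshold $1/2$ to recover a total computable $\{0,1\}$-valued extension. Your explicit treatment of the $1/4$-approximation, and the observation that the comparison is decidable only because $F$ is pinned to the exact values $0$ and $1$ on $A\cup B$, is a cleaner account of the step the paper describes informally as ``one step of the algorithm computing the rational approximation.''
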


There exists a \emph{computable} function \(f\) on positive integers that takes values only \(0\) and \(1\) and that does not have an everywhere defined computable extension; this result, first proved by Turing, can be found in Vereshchagin--Shen (\cite{vereshchagin2003computable}, Theorem~8). Let \(f(x)\) denote this unextendible function.  Let
\[
A = \{x \mid f(x)=0\}, 
\qquad
B = \{x \mid f(x)=1\}.
\]
Both sets are sequentially closed and the function \(f\) is continuous when viewed as a function on \(A \cup B\). Consequently, \(f\) does \emph{not} admit a continuous computable extension to the whole space \(X\).

\textbf{Remark:}
Our topological space is normal. Indeed, take any closed \(C,D \subset X\) with \(C \cap D = \varnothing\).
Since every subset of \(X\) is open, the sets \(C' = C\) and \(D' = D\) are open and still disjoint.
Therefore, for any \(C\) and \(D\) we can find two disjoint neighbourhoods \(C'\) and \(D'\), so the space is normal.
It is, of course, also clear that every singleton set is closed.

\begin{proof}
Assume, for contradiction, that the conclusion of the Tietze theorem holds. Then the function \(f(x)\) can be extended to a continuous constructive function with values in constructive real numbers. Let
\[
F \colon X \rightarrow \mathrm{CRN}
\]
be such an extension. Let
\[
g \colon \text{All } \mathrm{CRN} \rightarrow [0,1]
\]
be a program that performs one step of the algorithm computing the rational approximation of a constructive real number and outputs the resulting rational \(a\).


If the conclusion of the Tietze theorem holds, then we can extend the function \(f(x)\) to a continuous constructive function whose values lie in the constructive real numbers.  
We argue by contradiction and let
\[
F \colon X \rightarrow \mathrm{CRN}
\]
be such an extension.

\medskip
Let
\[
g \colon \text{All }\mathrm{CRN} \rightarrow [0,1]
\]
be a program that performs one step of the algorithm computing a rational approximation of a constructive real number and outputs that rational as \(a\).
Define
\[
g(x)=
\begin{cases}
0, & a < 0.5,\\[4pt]
1, & a \ge 0.5.
\end{cases}
\]

\medskip
Set
\[
h(x)=g\bigl(F(x)\bigr),
\]
so \(h\) is the composition of two programs: it applies \(g\) to the result of \(F\).
The program \(h\) always terminates on all inputs. Moreover, \(h(x)\) is an extension of \(f(x)\) and takes only the values \(0\) and \(1\).

\medskip
Hence we obtain a continuous extension of an \emph{unextendible} function, which is a contradiction. 
\end{proof}

\textbf{Remark:}
The program \(g\) is \emph{not} well defined as a constructive function on the space of all \(\mathrm{CRN}\)s because different programs producing equivalent \(\mathrm{CRN}\)s may be mapped to different \(0,1\) values.
Nevertheless, the composition \(h\) is still well defined, so there is no gap in the proof.
Indeed, for two distinct programs computing the same constructive real number \(F(x)\), the corresponding inputs \(x\) must be different; hence for every \(x\) there is a unique value \(F(x)\) and a unique program that computes this \(\mathrm{CRN}\) (whether or not it has other equivalent representations).
Consequently, the first step of the program yields a unique output, and there is only one possible value $h(x)=g\bigl(F(x)\bigr)$ for each \(x\).

\bigskip
\textbf{Remark:}
Munkres presents two versions of the Tietze Extension Theorem: one where the range of the function is a bounded interval and another where the range is the entire real line.
Above we proved that the latter version fails in constructive mathematics when closed sets are interpreted as \emph{sequentially closed}.
By a similar argument, the bounded‑interval version of the Tietze theorem also fails in the constructive setting.
Urysohn's lemma states that if a topological space is \emph{normal}, then any two disjoint closed subsets can be separated by a continuous function. 
Indeed, Urysohn's lemma is a crucial step in proving the Tietze Extension Theorem.

\bigskip
Specifically, let \(X\) be a normal topological space and let \(A,B \subseteq X\) be disjoint closed subsets. 
Then there exists a continuous function
\[
f \colon X \rightarrow [0,1]
\]
such that
\[
f(a)=0 \quad\text{for all } a\in A,
\qquad
f(b)=1 \quad\text{for all } b\in B .
\]

\bigskip
Our result also shows that the conclusion of Urysohn's lemma does \emph{not} always hold in constructive mathematics when closed sets are defined as \emph{sequentially closed} sets.

\bigskip

\textbf{Open questions:}  
Does the Tietze theorem hold if we define closed sets as the complements of \emph{constructive open sets}?  
From our perspective, with this interpretation of closed sets the answer may differ from the case where closed sets are defined as \emph{sequentially closed} sets.

\bigskip
\textbf{Remark:}
A constructive open set is one for which, at every point, there exists a program that produces an open ball contained in the set and containing that point.  
\emph{Lacombe open sets} are a special class: they are unions of a computable sequence of rational open balls whose enumeration is effectively given and whose membership is semi‑decidable \cite{kushner1984lectures}.  
Thus every Lacombe open set is constructive, but not conversely.  
For example, in a discrete metric space every singleton is open, so one can take an \emph{unenumerable} union of points—yielding a constructive open set that is \emph{not} Lacombe.  
(An instance of an unenumerable set is the complement of an enumerable undecidable set; see Post's theorem of Vinogradov–Schen \cite{vereshchagin2003computable}.)

\bigskip
\textbf{Acknowledgement:}
We are grateful to the \emph{Neoscholar} company for organising the CIS programme during which this research was conducted.  
We thank \emph{Viktor Chernov} and \emph{Vladimir Chernov} for posing the question, and our TA \emph{Nan} for assistance throughout the programme.
\bibliographystyle{amsplain}
\bibliography{ref}

\end{document}